\newcommand{\bbl}{\color{black}}
\newcommand{\ens}{\enspace}
\newtheorem{thm}{Theorem}
\newtheorem{lem}{Lemma}[section]
\newtheorem{cor}[lem]{Corollary}
\theoremstyle{definition}
\newtheorem{Def}[lem]{Definition}
\numberwithin{equation}{section}
\newcounter{parag}[subsection]
\newcounter{parage}[section]
\renewcommand{\theparage}{{\bf\thesection.\arabic{parage}}}
\newcommand{\parage}{\medskip \addtocounter{parage}{1} 
\noindent{\theparage\ } }
\newcounter{paraga}
\def\al{\alpha}
\def\be{\beta}
\def\ga{\gamma}
\def\de{\delta}
\def\De{\Delta}
\def\eps{{\varepsilon}}
\def\om{\omega}
\def\Om{\Omega}
\def\ze{{\zeta}}
\newcommand{\ov}{\overline}
\newcommand{\dist}{\operatorname{dist}}
\newcommand{\cont}{\operatorname{cont}}
\newcommand{\ID}{\mathop{\hbox{{\rm Id}}}\nolimits}
\newcommand{\I}{{\mathrm i}}
\newcommand{\dd}{{\mathrm d}}
\newcommand{\ee}{\mathrm e}
\newcommand{\ii}{^{-1}}
\newcommand{\ti}{\widetilde}
\newcommand{\ie}{{\emph{i.e.}}\ }
\newcommand{\eg}{{\it e.g.}\ }
\newcommand{\wrt}{{with respect to}}
\newcommand{\lhs}{{left-hand side}}
\newcommand{\rhs}{{right-hand side}}
\newcommand{\C}{\mathbb{C}}
\newcommand{\D}{\mathbb{D}}
\newcommand{\R}{\mathbb{R}}
\newcommand{\Z}{\mathbb{Z}}
\def\cB{\mathcal{B}}
\DeclarePairedDelimiter\abs{\lvert}{\rvert}%
\def\om{\omega}
\newcommand\Bbibitem[2]{\bibitem[#1]{#2}}
\newcommand{\defeq}{\coloneqq} 
\newcommand{\col}{\colon\thinspace}          
\newcommand{\begla}{\begin{equation}}
\newcommand{\beglab}[1]{\begin{equation}	\label{#1}}
\newcommand{\edla}{\end{equation}}
\newcommand{\imp}{\ens\Longrightarrow\ens}
\newcommand{\Imp}{\quad\Longrightarrow\quad}
\newcommand{\Li}{\operatorname{Li}}
\newcommand{\footremember}[2]{%
\footnote{#2}
\newcounter{#1}
\setcounter{#1}{\value{footnote}}%
}
\newcommand{\footrecall}[1]{%
\footnotemark[\value{#1}]%
}
\title{Hadamard Product and Resurgence Theory}
\author{%
  Y.~Li\footnote{Chern Institute of Mathematics 
  and LPMC, Nankai university, Tianjin 300071, China}
  \and
  D.~Sauzin\footnote{CNRS -- 
    Observatoire de Paris, PSL Research University, 75014 Paris, France}
  \footremember{CNU}{Department of Mathematics, Capital
    Normal University, Beijing 100048, China}
  \and
  S.~Sun\footrecall{CNU} \footnote{Academy for Multidisciplinary
    Studies, Capital Normal University, Beijing 100048, China}%
}
\date{December 2020}
\begin{document}

\thispagestyle{empty}

\maketitle

%








\begin{abstract}
  We discuss the analytic continuation of the Hadamard product of two
  holomorphic functions under assumptions pertaining to \'Ecalle's
  Resurgence Theory, proving that if both factors are endlessly
  continuable with prescribed sets of singular points~$A$ and~$B$, then so is
  their Hadamard product \wrt\ the set $\{0\}\cup A\cdot B$.
  {\bbl This is a generalization of the classical Hadamard Theorem in
    which all the branches of the multivalued analytic continuation of
    the Hadamard product are considered.}
  \end{abstract}






\bigskip



\section{Introduction}


\parage
The Hadamard product of two power series
$f(\xi), g(\xi) \in \C[[\xi]]$
is defined by the formula
\begla
f(\xi) = \sum_{n\ge0} a_n \xi^n
\quad\text{and}\quad
g(\xi) = \sum_{n\ge0} b_n \xi^n
\Imp
f\odot g(\xi) \defeq 
\sum_{n\ge0} a_n b_n \xi^n.
\edla
If the radii of convergence of~$f$ and~$g$ are positive, \ie
$f(\xi),g(\xi)\in\C\{\xi\}$, then
so is the radius of convergence of $f\odot g$.
From now on, we identify a convergent
power series with the holomorphic germ at~$0$ that it defines.
J.~Hadamard published in 1898 the proof of his now classical theorem \cite{Ha99},
to the effect that:
\begin{quote}
  \emph{Given finite subsets $A,B\subset\C^*$ and $K,L>0$, if $f$
    extends analytically to $\D_K \setminus \bigcup\limits_{\al\in A} R_\al$
  and $g$ to $\D_L \setminus \bigcup\limits_{\be\in B} R_\be$,
  then $f\odot g$ extends analytically to $\D_{KL}\setminus \bigcup\limits_{\ga\in
    A\cdot B} R_\ga$.}
  \end{quote}
  Here, we have denoted by $\D_M$ the open disc centred at the origin of
  radius~$M$,
  by~$R_\om$ the portion of the ray~$\om\,\R_{\ge0}$
  from~$\om$ onward,
  and by $A\cdot B$ the ``product set'' of~$A$ and~$B$:
  \begla
  \D_M \defeq D(0,M), \qquad
  R_\om \defeq \om\, [1,+\infty), \qquad
  A\cdot B \defeq \{ \al\be \mid (\al,\be)\in A\times B \}.
  \edla
  So, this is a statement about the ``principal branches'' of $f$, $g$
  and~$f\odot g$:
  we are considering their analytic continuation to a part of their
  Mittag-Leffler stars, assuming that any singular point that appears
  for~$f$ or~$g$ when following a ray emanating from the origin is
  isolated
  (in fact, J.~Hadamard also proves a more general version, in
  which the various $R_\om$'s are interpreted as portion of logarithmic spirals
  of the same polar slope).

In \cite{Bo98}, \'E.~Borel studied the analytic continuation of $f\odot
g$, also giving a proof of Hadamard's theorem, with a view to giving
more information on the nature of the singularities of~$f\odot g$ in
terms of the nature of the singularities of~$f$ and~$g$, at least in
the case of ``uniform singularities'' (\ie with trivial monodromy:
poles or essential singularities).
We refer the reader to \cite{PM20a}, \cite{PM20b} for a modern study and a
generalization of this result.

Interestingly, Borel also mentions something about the analytic
continuation of $f$, $g$ and $f\odot g$ to other sheets than the
principal one,\footnote{%
  In this article, when we speak of ``principal'' sheet or ``principal'' branch, we
  mean that we refer to the analytic continuation along straight line
  segments starting at the origin when possible, \ie we may identify the principal
  sheet with the Mittag-Leffler star.
}
and he gives a valuable hint in a footnote which has
been successfully used in the context of \'Ecalle's
Resurgence Theory to study the convolution product $f*g$---see \,\S\ref{quotationBorelftn}.4.
It is our aim to consider other branches than the principal one for
the analytic continuation of $f\odot g$, under appropriate assumptions
on~$f$ and~$g$.

\parage
In this article, we are concerned with the case when~$f$ and~$g$
satisfy a property pertaining to \'Ecalle's Resurgence Theory
\cite{Eca81}, namely a restricted form of ``endless continuability'':
\begin{Def}   \label{DefOmcont}
Given a non-empty closed discrete subset~$\Om$ of~$\C$, what we call
``$\Om$-continuable germ'' is a holomorphic germ~$f$ at the origin that admits
analytic continuation along any path $\ga\col [0,1] \to
\C\setminus\Om$ that has its initial point~$\ga(0)$ in the disc of convergence
of~$f$.
  \end{Def}
  This is the definition that has been employed in \cite{Eca81} with
  $\Om=2\pi\I\Z$, or in \cite{Sau13}, \cite{Sau15}, \cite{MS16}.
  It already covers a range of interesting examples, \eg in complex
  dynamics.
  The simplest examples of $\Om$-continuable germs are provided by
  meromorphic functions and algebraic functions; however, the
  definition puts no restriction on the nature of the singularities of
  the analytic continuation of an $\Om$-continuable germ, only on
  their location.
  More general is the definition of ``endless continuability''
  \cite{CNP93}, \cite{KS20}, and even more general that of
  ``continuability without a cut'' \cite{Eca85}.
  
  Our main result is
%
%
  \begin{thm}  \label{thmABcontinuable}
Given non-empty closed discrete subsets $A,B\subset\C$, if $f$ is an
$A$-continuable germ and~$g$ a $B$-continuable germ, then
\beglab{eqdefOmAB}
\Om \defeq \{0\} \cup (A\cdot B)
\edla
is closed and discrete and $f\odot g$ is an $\Om$-continuable germ.
\end{thm}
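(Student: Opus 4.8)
The approach is to reduce the problem to a Cauchy‑type integral representation of the Hadamard product and to control its analytic continuation by deforming the integration contour, in the spirit of the classical proofs of Hadamard and Borel but carried out along an \emph{arbitrary} path rather than along rays; the topological part of the statement will be disposed of first by a soft compactness argument.

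\emph{Discreteness.} Closedness and discreteness of $A$ and $B$ force $\rho_A:=\dist\big(0,A\setminus\{0\}\big)>0$ and $\rho_B:=\dist\big(0,B\setminus\{0\}\big)>0$ (an accumulation of moduli at $0$ would produce an accumulation point of $A$, resp.\ $B$, there). Hence every nonzero element of $A\cdot B$ has modulus $\ge\rho_A\rho_B$, so $0$ is isolated in $\Om$; and since a bounded subset of $A$ or of $B$ is finite, a sequence $\al_n\be_n\to c\neq0$ forces $(\al_n)$ and $(\be_n)$ to be bounded, hence eventually constant, whence $c\in A\cdot B$. Thus $\Om$ has no accumulation point in $\C$ and is discrete.

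\emph{Continuability.} First, one has
\beglab{eqHad}
f\odot g(\xi)=\frac1{2\pi\I}\oint_{|\ze|=r}f(\ze)\,g(\xi/\ze)\,\frac{\dd\ze}{\ze},
\edla
valid for $0<r<\rho_f$ and $|\xi|<r\rho_g$ (with $\rho_f,\rho_g$ the radii of convergence), as one checks by inserting Taylor expansions and using $\frac1{2\pi\I}\oint\ze^{n-m-1}\dd\ze=\de_{nm}$. Next, fix a path $\ga\col[0,1]\to\C\setminus\Om$ starting in the disc of convergence of $f\odot g$; we may assume $|\ga(0)|$ as small as we please, prepending a harmless deformation inside that disc. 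By compactness, $|\ga(t)|\in[m,M]$ and $\dist(\ga(t),\Om)\ge\de$ for some $0<m\le M$ and $\de>0$. Now, in \eqref{eqHad} read as a function of $\ze$, the integrand has its singularities in $A$ (from $f(\ze)$) and in $\ga(t)/B:=\{\ga(t)/\be\mid\be\in B\setminus\{0\}\}$ (from $g(\ga(t)/\ze)$); the crucial point is that \emph{these two obstacle sets are disjoint precisely because $\ga(t)\notin A\cdot B$}, that $\ga(t)/B$ is bounded and accumulates only at $\ze=0$, and that $A\setminus\{0\}$ stays at distance $\ge\rho_A$ from $0$. The plan is then to transport \eqref{eqHad} along $\ga$ through a continuous family of contours $\Ga_t$, each a circle of radius $r_0(t)<\rho_A$ chosen to lie in a fixed ``gap'' of the discrete set $\{|\ga(t)|/|\be|\mid\be\in B\}$: concretely $r_0(t):=\tfrac{|\ga(t)|}2\big(\tfrac1{\rho'}+\tfrac1{\rho''}\big)$ for two consecutive moduli $\rho'<\rho''$ of elements of $B$ with $\rho'$ large enough that $M/\rho'<\rho_A$ (and $r_0(t)$ just taken small if $B$ is finite). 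Along such a family, no point of $A$ is ever enclosed, so $f$ keeps coinciding with its principal germ on $\Ga_t$, and no point of $\ga(t)/B$ ever meets $\Ga_t$, so the branch $\ti g_t$ of $g(\ga(t)/\ze)$ is unambiguously carried along (the relevant $w$‑paths having modulus $|\ga(t)|/r_0(t)\notin\{|\be|\mid\be\in B\}$); one only has to deform, once at the start, the circle of \eqref{eqHad} down to $\Ga_0$, crossing the finitely many obstacles met — which does not change the value, $f$ staying its principal germ and $g$ being continued past small notches around the finitely many points of $\ga(0)/B$ crossed. Finally, the resulting $F(t):=\frac1{2\pi\I}\oint_{\Ga_t}f(\ze)\,\ti g_t(\ga(t)/\ze)\,\frac{\dd\ze}{\ze}$ extends, upon replacing $\ga(t)$ by a nearby $\xi$ and freezing the contour locally, to a holomorphic germ at each $\ga(t)$; these germs continue one another along $\ga$ and reduce to $f\odot g$ at $t=0$, so $F$ is the analytic continuation of $f\odot g$ along $\ga$, and since $\ga$ is arbitrary, $f\odot g$ is $\Om$‑continuable.

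The step I expect to be the main obstacle is the construction and justification of this moving contour. The delicate feature — and the reason the singleton $\{0\}$ must be adjoined to $A\cdot B$ — is that $\ga(t)/B$ accumulates at $\ze=0$: a cycle winding once around $0$ cannot be pushed off to $0$, so when $\xi$ is driven around a product $\al\be$ the contour picks up the monodromy of the integrand at $\ze=0$, and $f\odot g$ may genuinely acquire a singularity at $\xi=0$ on that sheet (as already happens for $\Li_2=\big({-\log(1-\xi)}\big)\odot\big({-\log(1-\xi)}\big)$). One must also make the notching uniform in $t$, using $\dist(\ga(t),\Om)\ge\de$ together with discreteness to secure the room, and verify the holomorphic dependence of $F$ on $\xi$. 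A variant worth recording is the substitution $\ze=\ee^{-s}$, $\xi=\ee^{-\sigma}$, which turns \eqref{eqHad} into a convolution‑type integral and connects the statement to the known stability of endless continuability under convolution.
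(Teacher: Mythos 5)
Your preliminaries are fine and match the paper: the discreteness of $\Om$ is proved by the same compactness argument, the Cauchy integral~\eqref{eqmultconv} is the same starting point, and the overall plan (deform the contour so that, as $\xi$ travels along~$\ga$, the obstacle $A$ is never crossed and the obstacle $\ga(t)/B'$ never meets the contour) is exactly the strategy of the paper. The gap is in the concrete contour construction, and it is a genuine one.

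Circles $\Ga_t$ of a cleverly chosen radius $r_0(t)$ cannot carry out this plan, and in fact your $F(t)$ fails to be the analytic continuation already in the simplest interesting example. Take $f=g=-\log(1-\xi)$, so $A=B=\{1\}$, $f\odot g=\Li_2$, and let $\ga$ be a loop based near $0$ that winds once around $1$ without winding around $0$. With $\Ga_t=\{|\ze|=r_0(t)\}$, the $w$-loop $s\mapsto \ga(t)\ee^{-2\pi\I s}/r_0(t)$ lies on the fixed circle $|w|=\rho^*$ for all $t$, and since $\arg\ga$ returns to its initial value, the $w$-homotopy as $t$ runs from $0$ to $1$ has zero net rotation; the carried branch $\ti g_1$ therefore coincides with $\ti g_0$, and $F(1)=F(0)$. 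But $\Li_2$ does pick up monodromy along $\ga$: the construction produces a single-valued function, contradicting the known multivaluedness. The geometric reason is that when $\ga$ winds around $\al\be$, the moving obstacle $\ga(t)/\be$ winds around the fixed obstacle $\al$; a correct contour must grow a ``finger'' following $\ga(t)/\be$ that wraps around $\al$, resulting in a shape that is nothing like a circle. Relatedly, the sentence about deforming ``down to $\Ga_0$, crossing the finitely many obstacles met --- which does not change the value'' is incorrect as written: crossing a singularity of the integrand does change the contour integral, and if you instead detour with notches the resulting $\Ga_0$ is no longer a circle, so the clean ``$|w|=\rho^*$'' bookkeeping on which you rely is lost; your $F(0)$ over the plain circle would not equal $f\odot g(\ga(0))$.

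The missing ingredient is precisely what the paper supplies: a \emph{deformation of the identity adapted to~$\ga$}, a family of Lipschitz homeomorphisms $\Psi_t$ of $\C$ with $\Psi_0=\ID$, fixing $0$ and every $\al\in A'$, and sending $\ga(0)/\be$ to $\ga(t)/\be$ for each $\be\in B'$. One then takes the contour to be the image $\Psi_t(C_\rho)$ of a fixed small circle. The homeomorphism automatically produces the intricate shapes (it can, and must, wrap around the $\al$'s), and the condition ``$\ga(t)/\Psi_t(\ze)\notin B'$ for $\ze\in C_\rho$'' is guaranteed structurally rather than by a radius computation. The paper constructs $\Psi_t$ as the flow map $\Phi^{0,t}$ of an explicit non-autonomous vector field $X(t,\ze)=c(t,\ze)\,\ga'(t)$, with $c(t,\ze)=\eta(\ze)\,\ze/\ga(t)$ where $\eta$ is a Lipschitz cutoff vanishing on $A'$ and equal to $1$ at distance $\ge\eps$ from $A'$; the quantitative lemma $\dist(\ga(t)/\be,A')\ge\eps$ ensures $c(t,\ga(t)/\be)=1/\be$ so that $t\mapsto\ga(t)/\be$ is an integral curve. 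Without something of this kind, the contour-deformation plan cannot be closed.
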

The proof is given in Sections~\ref{secdefid}--\ref{secvf}.


\smallskip

Note that in Definition~\ref{DefOmcont} we may have $0$ belonging
to~$\Om$ or not, but in all cases does an $\Om$-continuable germ have a
principal branch regular at~$0$. Simple examples with $\Om=\{0,1\}$
are
$f_1(\xi) \defeq -\frac{1}{\xi} \log(1-\xi)$ and
$\Li_2(\xi) \defeq \int_0^\xi f_1(\xi_1)\,\dd\xi_1$.
So, Theorem~\ref{thmABcontinuable} is compatible with the Hadamard
Theorem, and J.~Hadamard is right not to include~$0$ among the possibly
singular points of $f\odot g$
as far as only the principal branches are concerned.

The necessity of including~$0$ among the possibly singular points of
$f\odot g$ when we consider its analytic continuation on other sheets
was already noted by Borel, who gives credit to E.~Lindel\"of for that point
\cite[ftn~2]{Bo98}.
A simple example that illustrates it is
$f_0(\xi) \defeq -\log(1-\xi)$:
the germ $f_0$ is $\{1\}$-continuable, but the germ $f_0\odot f_0$ is
not; in fact, $f_0\odot f_0 = \Li_2$ is $\{0,1\}$-continuable.

  
\parage
The formal Borel transform $\cB\col t\,\C[[t]] \to \C[[\xi]]$ is defined
by:
\begla
\ti f(t) = \sum_{n\ge0} c_n t^{n+1}
\Imp
\cB\ti f(\xi) = \sum_{n\ge0} \frac{c_n}{n!} \xi^{n}.
\edla
In other words, $\cB\ti f(\xi) = \ee^\xi \odot \frac{\ti
  f(\xi)}{\xi}$.
A formal power series $\ti f(t) \in t\,\C[[t]]$ is said to be an
\emph{$\Om$-resurgent series} when its Borel transform $\cB\ti f$ is an
$\Om$-continuable germ \cite{Eca81}, \cite{Sau13}.
The singularities of~$\cB\ti f$ then give us information on the
divergence of~$\ti f(t)$, inasmuch as the Borel transform of a
convergent power series has no singularity at all.

Now,
\begla
f=\cB\ti f \quad\text{and}\quad g = \cB\,\ti g \Imp
f\odot g = \cB(e_1 \odot \ti f \odot\ti g),
\edla
where $e_1(t) \defeq t \, \ee^{t}$, hence our result can be rephrased as
\begin{quote}
  \emph{%
    Given non-empty closed discrete subsets $A,B\subset\C$, if $\ti f(t)$ is an
$A$-resurgent series and~$\ti g(t)$ a $B$-resurgent series, then
$e_1\odot \ti f(t)\odot \ti g(t)$ is an $\Om$-resurgent
series with~$\Om$ as in~\eqref{eqdefOmAB}.  }
  \end{quote}

  This is to be compared with Theorem~6.27 of \cite{MS16}, which can
  be rephrased as:
  \begin{quote}
    \emph{%
      Given non-empty closed discrete subsets $A,B\subset\C$ such that
    $\Om \defeq A \cup B \cup (A+B)$ is closed and discrete, if $\ti f(t)$ is an
$A$-resurgent series and~$\ti g(t)$ a $B$-resurgent series, then
their product $\ti f(t)\cdot \ti g(t)$ is an $\Om$-resurgent
series.  }
\end{quote}

The latter statement is a variation on a result on the multiplication
of $\Om$-resurgent series proved in \cite{Sau13}.
More generally, the stability under multiplication of the space of
resurgent series (defined as Borel preimages of endlessly continuable
germs or of germs continuable without a cut) is a key fact of
\'Ecalle's Resurgence Theory.
  
\parage
The shift of indices in the definition of~$\cB$ (mapping $t^{n+1}$ to
$\xi^n/n!$) is a convenient normalization which results in the
following relation between the ordinary multiplication in $\C[[t]]$
and the convolution~$*$ in $\C[[\xi]]$:
\beglab{eqaddconv}
\cB(\ti f \cdot \ti g) = f * g(\xi) \defeq \int_0^\xi f(\xi_1) g(\xi-\xi_1)\,\dd\xi_1.
\edla
This is the usual ``additive'' convolution (associated with Laplace
transform and Borel-Laplace summation), whereas the Hadamard product
may be considered as a ``multiplicative'' convolution. Indeed, as was
well-known already in Hadamard's time,
\beglab{eqmultconv}
0<\rho<R_f
\ens\text{and}\ens
\abs\xi < \rho R_g \Imp
f\odot g(\xi) = \frac{1}{2\pi\I}\oint_{C_\rho} f(\ze)
g\Big(\frac{\xi}{\ze}\Big) \frac{\dd\ze}{\ze},
\edla
where~$C_\rho$ is the parametrized circle: $s \in[0,1] \mapsto
\ze = \rho\,\ee^{2\pi\I s}$
(we have denoted by~$R_f$ and~$R_g$ the radii of convergence of~$f$ and~$g$).
\medskip

Actually, Formula~\eqref{eqmultconv} is the basis of Hadamard's and Borel's arguments in
\cite{Ha99} and \cite{Bo98}, and it will be the starting point of our
analysis as well.
The idea, translated from Borel's own words, is that ``this expression
of $f\odot g$ stays valid \emph{if one deforms the integration contour without
letting it cross any singular point of the integrand}'' and, ``the
contour having been fixed in an arbitrary manner, one obtains the
analytic continuation of $f\odot g$ \emph{by moving~$\xi$ in the
  plane, provided the singular points of the integrand do not cross
  the integration contour''}.

It seems that, throughout \cite{Bo98}, Borel keeps in mind the possibility of
going to the non-principal sheets and dealing with multivalued
analytic continuation, yet reluctantly so, since when he explicitly
mentions that possibility (in the aforementioned footnote~2 or
later in Sec.~III when he writes ``if the functions~$f$ and~$g$ were
multivalued...'') he tends to recommend to discard it (see the last
paragraph of his Sec.~III: ``It seems to us useless to insist on the
latter point... one would be led to complicated statements...'').
However, {\bbl he puts in a footnote (footnote~3) an idea that} has been successfully
adapted to study the analytic continuation of the \emph{additive}
convolution~\eqref{eqaddconv} of endlessly continuable germs in the context of
Resurgence \cite{Eca81}, \cite{CNP93}.
\label{quotationBorelftn}

We reproduce here the content of footnote~3 of \cite{Bo98}, with minor notational
changes. Borel writes~\eqref{eqmultconv} as
$f\odot g(\xi) = \frac{1}{2\pi\I}\oint_{C}
f\big(\frac{1}{x}\big) g(\xi x)
 \frac{\dd x}{x}$,
 with $C = C_{\rho\ii}$.

\begin{quotation}
``Let us conceive the closed contour $C$ as a flexible extensible
thread, the singular points of $f\big(\frac{1}{x}\big)$ as pins stuck
into the plane, the singular points of $g(\xi x)$ as pins that travel as
$\xi$ moves.
It is necessary and sufficient that the thread always part the two
systems of pins. Now, this will always be possible, by means of a
suitable deformation, if, while travelling, the second pins never come
to \emph{hit} the first ones (...); the thread may acquire a very
complicated form, but this is harmless." \cite[ftn~3, p.~240]{Bo98}
\end{quotation}

In the case of the additive convolution product~\eqref{eqaddconv},
instead of the closed contour $C=C_{\rho\ii}$, it is the line segment
$[0,\xi]$ that must be deformed.
In \cite{Sau13} {\bbl and \cite{MS16}}, it was shown how to contruct explicitly this
deformation
{\bbl when $\xi$ moves in the complex plane without meeting $A\cup B \cup (A+B)$};
{\bbl this is done} by applying to the initial integration contour a
homeomorphism that is a deformation of the identity obtained as the flow
of an explicit non-autonomous vector field tailored to the situation.

A benefit of such a detailed rigorous proof \wrt\ the arguments given
in \cite{Eca81} or \cite{CNP93} is that it allows for quantitative
estimates. 
{\bbl These estimates, in turn, can be}
used to show the stability of the space of resurgent series under
nonlinear operations and not only {\bbl multiplication} 
\cite{Sau15}, \cite{MS16}, \cite{KS20}.

\parage
Our proof of Theorem~\ref{thmABcontinuable} is spread over
\begin{enumerate}[--]
\item Section~\ref{secdefid}, which gives details on the kind of
``deformation of the identity'' that we need to deform the integration
contour in~\eqref{eqmultconv},
\item Section~\ref{secvf}, which explains how to obtain this
  deformation of the identity out of the
  flow of an explicit non-autonomous vector field.
  \end{enumerate}

This proof bears some resemblance with the proof of the theorem on
convolution presented in \cite{Sau13}, however a number of
modifications were necessary, notably a novel method to construct a
non-autonomous vector field adapted to Formula~\eqref{eqmultconv} (Section~\ref{secvf}).

\medskip

{\bbl We were led to Theorem~\ref{thmABcontinuable} while studying the Borel
transform of the Moyal star product of two resurgent series $\ti
f(t,q,p)$ and $\ti g(t,q,p)$
\cite{LSS20}. Indeed, the Borel counterpart of the Moyal star
product can be written in terms of the Borel transforms of the
factors, $f(\xi,q,p)$ and $g(\xi,q,p)$, and the formula appears as a
mixture of additive convolution \wrt~$\xi$ and Hadamard product; more
specifically, it involves the Hadamard product \wrt~$\ze$ of the
Taylor expansions 
$f(\xi_1,q,p+\ze)\odot g(\xi_2,q+\ze,p)$
and then a convolution-like integration \wrt~$\xi_1$ and~$\xi_2$.
However, in such a many-variable context, the restrictions one needs
to put on the singular locus of $f$ and~$g$ are more stringent than in
Definition~\ref{DefOmcont}, namely one requires
``algebro-resurgence'' \cite{GGS14}, \cite{LSS20}: $f,g\in\C\{\xi,q,p\}$ have analytic
continuation away from a proper algebraic subvariety of~$\C^3$
(or~$\C^{2N+1}$ if one is interested in deformation quantization of
$\C^{2N}$ with coordinates $q_1,\ldots,q_N,p_1,\ldots,p_N$).
As a result, the functions of~$\ze$ involved in the Hadamard product
part of the formula have at most finitely many singularities.}

\medskip

{\bbl 
Our work appears as complementary to the one of \cite{PM20a} and
\cite{PM20b}, which is devoted to the nature of the singularities of
the principal branch of $f\odot g$ and contains new formulas for the
monodromy at a given point $\om = \al \be$.
Considering the monodromy operator~$\De_\om$ forces to go around~$\om$ and visit other
sheets of the Riemann surface of $f\odot g$, but only the nearby
sheets (the ``next-to-principal'' one for $\De_\om(f\odot g)$, or the
ones reached by circling several times around~$\om$ if one wants to
iterate~$\De_\om$).
Dealing with these nearby branches of the analytic continuation of the
Hadamard product requires a
deformation of the integration contour in~\eqref{eqmultconv} that is
tractable by means of relatively elementary geometry;
the deformation becomes much more intricate if the
variable~$\xi$ is allowed to travel to arbitrary sheets, but it can still be
mathematically described and studied by following the method of
Sections~\ref{secdefid}--\ref{secvf}.
Therefore, it would be interesting to see whether this can be used to study the nature of
the singularities of the non-principal branches of $f\odot g$ in the spirit of
\cite{PM20a} and \cite{PM20b}.
Moreover, as noted in \cite{PM20a}, one might try to explore more deeply
the connection with \'Ecalle's Resurgence Theory; the so-called
``alien operators'', designed to study the singularities of endlessly
continuable functions, behave particularly well in relation with the
additive convolution~\eqref{eqaddconv} and they lead to contour
deformations reminiscent of those employed in \cite{PM20a}---see \eg
pp.~236--237 of \cite{MS16}.
}

\medskip

{\bbl
  Another avenue of research that naturally suggests itself would be
  to consider the Hadamard product of endlessly continuable germs~$f$ and~$g$
  which are not $\Om$-continuable in the sense of
  Definition~\ref{DefOmcont} for any discrete closed set~$\Om$.
  One may conjecture that $f\odot g$ is still endlessly continuable
  because, during the process of analytic continuation by means of
  contour deformation, one ``feels'' the presence of only finitely
  many singularities of~$f$ and~$g$ at the same time, which is why
  (additive) convolution (and even iterated convolutions) could be handled in \cite{KS20}; one
  might try to apply similar techniques to the Hadamard product.
}


\section{Deformations of the identity adapted to a path~$\ga$}   \label{secdefid}


We now begin the proof of Theorem~\ref{thmABcontinuable}.
We thus give ourselves $A$ and~$B$ non-empty closed discrete subsets
of~$\C$, and we define~$\Om$ by~\eqref{eqdefOmAB}.
We also introduce
\beglab{ineqab}
A' \defeq A \setminus\{0\}, \quad
a \defeq \min\{\abs\al \mid \al\in A'\}, \qquad
B' \defeq B \setminus\{0\}, \quad
b \defeq \min\{\abs\be \mid \be\in B'\},
\edla
regardless of $0$ belonging to $A\cup B$ or not.
Note that
\beglab{equivOm}
\Om = \{0\}\cup (A'\cdot B').
\edla


\parage
The first claim to be proved is
\begin{lem}
The set~$\Om$ is closed and discrete.
  \end{lem}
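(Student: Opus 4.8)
The plan is to show that $\Om$ has no accumulation point in~$\C$; for a non-empty subset of~$\C$ this is equivalent to being closed and discrete, and $0\in\Om$ takes care of non-emptiness. Concretely I would prove that $\Om\cap\ov\D_R$ is finite for every $R>0$, writing $\ov\D_R\defeq\{z\in\C\mid\abs z\le R\}$. The one general fact I rely on is that a closed discrete subset of~$\C$ meets any compact set in a finite set (its trace on a compact set is itself compact and discrete, hence finite); this applies to~$A$, to~$B$, and to any subset thereof.

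First I would dispose of the degenerate cases. If $A=\{0\}$ then $A'=\emptyset$, so~\eqref{equivOm} gives $\Om=\{0\}$, which is trivially closed and discrete; symmetrically if $B=\{0\}$. So from now on I assume $A'\neq\emptyset$ and $B'\neq\emptyset$, which is precisely what makes the quantities $a$ and~$b$ of~\eqref{ineqab} meaningful. Here a small check is needed: since $A$ is closed and discrete with $0\notin A'$, there is $r>0$ such that $A\cap\{\abs z<r\}\subseteq\{0\}$, whence $A'\cap\{\abs z<r\}=\emptyset$ and $\inf\{\abs\al\mid\al\in A'\}\ge r>0$; moreover, for any $\al_0\in A'$ the set $A'\cap\{\abs z\le\abs{\al_0}\}=A'\cap\{r\le\abs z\le\abs{\al_0}\}$ is finite and non-empty, so this infimum is attained and $a=\min\{\abs\al\mid\al\in A'\}>0$. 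The same argument gives $b>0$.

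Now fix $R>0$ and let $\om\in\Om\cap\ov\D_R$ with $\om\neq0$. By~\eqref{equivOm} we have $\om=\al\be$ with $\al\in A'$, $\be\in B'$, and then $\abs\be\ge b$ forces $\abs\al\le R/b$, while $\abs\al\ge a$ forces $\abs\be\le R/a$. Hence $\al\in A'\cap\ov\D_{R/b}$ and $\be\in B'\cap\ov\D_{R/a}$, and both of these sets are finite by the general fact above. Therefore $\Om\cap\ov\D_R\subseteq\{0\}\cup\big((A'\cap\ov\D_{R/b})\cdot(B'\cap\ov\D_{R/a})\big)$, which is finite, being the union of a point and the product set of two finite sets. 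Since $R$ is arbitrary, $\Om$ has no accumulation point, hence is closed and discrete.

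I do not expect a genuine obstacle here. The only points calling for care are the degenerate cases $A=\{0\}$ or $B=\{0\}$ (where $a$ or~$b$ is undefined) and the verification that $a,b>0$ with the minimum attained --- which is exactly the statement that $A'$ and~$B'$ stay a positive distance away from~$0$, and hence that the product set $A'\cdot B'$ cannot accumulate at the origin.
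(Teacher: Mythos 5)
Your proof is correct and follows essentially the same route as the paper's: reduce to showing $\Om\cap\ov\D_R$ is finite for each $R>0$, then use $\abs\al\ge a$, $\abs\be\ge b$ to confine $\al$ and $\be$ to compact discs whose intersections with $A$, $B$ are finite. The only difference is that you spell out the degenerate cases $A'=\emptyset$ or $B'=\emptyset$ and verify that $a,b>0$ with the minima attained, points the paper leaves implicit; this is welcome extra care rather than a different argument.
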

  \begin{proof}
    Clearly, this is equivalent to saying that
    \beglab{equivcl}
    \text{For any $R\in \R_{>0}$,} \quad \Om \cap \ov\D_R \ens\text{is finite.}
    \edla
    Let $R\in\R_{>0}$.
    Any pair $(\al,\be)\in A'\times B'$ such that $\al\be \in \ov\D_R$
    must satisfy $\abs\al\ge a>0$, $\abs\be\ge b>0$ and
    $\abs{\al\be}\le R$.
    Hence, $\al\in\ov\D_{R/b}$ and $\be\in\ov\D_{R/a}$.
    But $A\cap\ov\D_{R/b}$ and $B\cap\ov\D_{R/a}$ are finite,
    by the rephrasing~\eqref{equivcl} of our assumption on~$A$
    and~$B$, hence there only finitely many such pairs $(\al,\be)$ and
    $\Om\cap\ov\D_R$ is finite.
    \end{proof}


\parage
Let $I\defeq [0,1]$. We now give ourselves an $A$-continuable
germ~$f$, a $B$-continuable germ~$g$, and a path
$\ga\col I \to \C\setminus\Om$ that has its initial point close enough
to the origin, {\bbl specifically}
\beglab{hypgaz}
0 < \abs{\ga(0)} < ab.
\edla
Let us choose $\rho>0$ such that
\beglab{ineqrho}
\frac{\abs{\ga(0)}}{b} < \rho < a.
\edla
Note that the radius of convergence~$R_f$ of~$f$ is at least~$a$, and
the radius of convergence~$R_g$ of~$g$ at least~$b$,
hence Formula~\eqref{eqmultconv} implies that 
\beglab{eqfodotgaz}
f\odot g(\xi) = \frac{1}{2\pi\I}\oint_{C_\rho} f(\ze)
g\Big(\frac{\xi}{\ze}\Big) \frac{\dd\ze}{\ze}
\quad\text{for}\ens \xi \;\text{close enough to}\; \ga(0).
\edla

To prove Theorem~\ref{thmABcontinuable}, we just need to prove that
$f\odot g$ admits analytic continuation along~$\ga$.
%
Without loss of generality, we may suppose that~$\ga$ is $C^1$ with
a derivative~$\ga'$ that is Lipschitz.


\parage
We now make precise the notion of deformation of the identity that we
have alluded to in the introduction:
\begin{Def}   \label{Defdeformidga}
  We call ``deformation of the identity adapted to~$\ga$'' any family
  $(\Psi_t)_{t\in I}$ of Lipschitz homeomorphisms of~$\C$ such that
{\bbl $\Psi_0 = \ID_\C$ and}
  %
    %
  \begin{align}
    \label{eqPsital}
    \al\in \{0\}\cup A' &\Imp \quad\ens
    \Psi_t(\al) = \al \qquad\text{for all}\ens t\in I,\\[1ex]
    \label{eqPsitgabe}
  \be\in B'\quad &\Imp \Psi_t\Big(\frac{\ga(0)}{\be}\Big) = \frac{\ga(t)}{\be}
    \quad\text{for all}\ens t\in I.
  \end{align}
  %
  %
\end{Def}

\begin{lem}
  Suppose we have found~$(\Psi_t)_{t\in I}$, a deformation of the
  identity adapted to~$\ga$, then the analytic continuation of
  $f\odot g$ along~$\ga$ exists and is given, at each $\ga(t)$, $t\in
  I$, by the formula
  \beglab{contfodotg}
\cont_{\ga\mid t} (f\odot g)(\xi) = \frac{1}{2\pi\I}\oint_{\Psi_t(C_\rho)} f(\ze)
g\Big(\frac{\xi}{\ze}\Big) \frac{\dd\ze}{\ze}
\quad\text{for}\ens \xi \;\text{close enough to}\; \ga(t).
\edla
\end{lem}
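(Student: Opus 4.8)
The plan is to define, for each $t\in I$, the would‑be analytic element
\[
F_t(\xi) \defeq \frac{1}{2\pi\I}\oint_{\Psi_t(C_\rho)} f(\ze)\, g\Big(\frac{\xi}{\ze}\Big)\,\frac{\dd\ze}{\ze},\qquad \xi \text{ close to } \ga(t),
\]
and to check three things: (i) each $F_t$ is a well‑defined holomorphic germ at $\ga(t)$; (ii) $F_0 = f\odot g$ near $\ga(0)$; (iii) $t\mapsto F_t$ is a continuous family of germs along~$\ga$. Since (as observed just before Definition~\ref{Defdeformidga}) it suffices to prove that $f\odot g$ admits analytic continuation along~$\ga$, items~(i)--(iii) together yield both the existence of $\cont_{\ga\mid t}(f\odot g)$ and Formula~\eqref{contfodotg}.

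First I would pin down the geometry of the contour. The loop $\Psi_t(C_\rho)$, \ie $s\mapsto \Psi_t(\rho\ee^{2\pi\I s})$, avoids $A$ and the origin: $C_\rho$ is disjoint from $\{0\}\cup A'$ because $\rho<a$ by~\eqref{ineqrho}, and $\Psi_t$ is a homeomorphism fixing $\{0\}\cup A'$ pointwise by~\eqref{eqPsital}, so $\Psi_t(\ze)\in\{0\}\cup A'$ would force $\ze\in\{0\}\cup A'$, impossible on $C_\rho$. The same mechanism, fed by~\eqref{eqPsitgabe} and the inequality $\big|\frac{\ga(0)}{\be}\big|\le\frac{|\ga(0)|}{b}<\rho$ (from~\eqref{ineqab} and~\eqref{ineqrho}), shows $\frac{\ga(t)}{\be}=\Psi_t\big(\frac{\ga(0)}{\be}\big)\notin\Psi_t(C_\rho)$ for every $\be\in B'$. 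Combined with the compactness of $\Psi_t(C_\rho)$, the fact that it avoids $0$, and the discreteness of $B$, this gives that the set $\{\frac{\xi}{\be}\mid\be\in B'\}\cup\{0\}$ — which collects all the points where $\ze\mapsto g(\frac{\xi}{\ze})$ could fail to be continued — stays at a positive distance from $\Psi_t(C_\rho)$, locally uniformly in $\xi$ near $\ga(t)$.

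The substantive part of (i) is that $f$, and also $\ze\mapsto g(\frac{\xi}{\ze})$ for $\xi$ near $\ga(t)$, extend single‑valuedly and holomorphically to a neighbourhood of $\Psi_t(C_\rho)$; then the integrand of $F_t$ is holomorphic in $\ze$ near the contour and in $\xi$ near $\ga(t)$, so $F_t$ is a holomorphic germ at $\ga(t)$, and holomorphy of a parameter integral finishes (i). For~$f$: the loops $\Psi_u(C_\rho)$, $u\in[0,t]$, form a homotopy inside $\C\setminus A$ from $C_\rho$ to $\Psi_t(C_\rho)$ (same injectivity argument), $f$ is holomorphic on its disc of convergence $\D_{R_f}\supseteq\overline\D_\rho$ since $R_f\ge a>\rho$, hence has trivial monodromy around $C_\rho$, and $f$ is $A$‑continuable; by homotopy invariance of analytic continuation its monodromy around $\Psi_t(C_\rho)$ is still trivial, which gives the single‑valued extension. (Reaching a point of $\Psi_u(C_\rho)$ from the disc of convergence of $f$ is legitimate via the path $u'\mapsto\Psi_{u'}(\rho)$, $u'\in[0,u]$, which starts in $\D_{R_f}$ and avoids $A$ for the same reason.) For $g$ one runs the analogous argument with the loops $s\mapsto \ga(u)/\Psi_u(\rho\ee^{2\pi\I s})$: at $u=0$ this is a circle of radius $|\ga(0)|/\rho<b\le R_g$, it avoids $B$ for all $u\in[0,t]$ by~\eqref{eqPsitgabe} and~\eqref{ineqrho} (and because $\ga(u)\neq0$), and $g$ is $B$‑continuable with trivial monodromy around the $u=0$ loop; the locally uniform distance estimate of the previous paragraph then lets one pass from $\xi=\ga(t)$ to $\xi$ in a small disc around $\ga(t)$. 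Item~(ii) is immediate from $\Psi_0=\ID_\C$ and~\eqref{eqfodotgaz}. For~(iii): fixing $t_0\in I$, by continuity of $(t,\ze)\mapsto\Psi_t(\ze)$ the region $\bigcup_{u\in[t_0,t]}\Psi_u(C_\rho)$ swept between the two contours is, for $t$ close to $t_0$, a thin neighbourhood of $\Psi_{t_0}(C_\rho)$, so — using the distance estimates and the single‑valuedness of $f$ and of $\ze\mapsto g(\frac{\xi}{\ze})$ on a fixed neighbourhood of $\Psi_{t_0}(C_\rho)$ — it lies in the common $\ze$‑domain of holomorphy of the integrand for all $\xi$ in a fixed small disc around $\ga(t_0)$; Cauchy's theorem gives $F_t(\xi)=F_{t_0}(\xi)$ there, so $t\mapsto F_t$ is a continuous path of germs with $F_0=f\odot g$, hence is the analytic continuation of $f\odot g$ along~$\ga$, which is~\eqref{contfodotg}.

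I expect the crux to be the single‑valuedness step: with $A$‑ and $B$‑continuability as the only hypotheses on $f$ and $g$, one must combine the homotopy invariance of analytic continuation with careful bookkeeping of which paths are admissible under Definition~\ref{DefOmcont}, and squeeze from the discreteness of $A$ and $B$ the locally uniform distance estimates that keep the integrand holomorphic as both $t$ and $\xi$ move. The remaining ingredients — holomorphy of a contour integral in a parameter, and the contour‑deformation invariance via Cauchy's theorem — are routine; the genuinely creative input of the whole proof of Theorem~\ref{thmABcontinuable} is deferred to the construction of the deformation $(\Psi_t)$ itself in Section~\ref{secvf}.
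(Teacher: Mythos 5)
Your proposal is correct and follows essentially the same route as the paper's proof: you check well-definedness of the contour integral, its agreement with $f\odot g$ near $\ga(0)$ via~\eqref{eqfodotgaz}, and local constancy in~$t$ via Cauchy's theorem, which is exactly the paper's three-step argument. You are more explicit than the paper in justifying the single-valuedness of $f$ and of $\ze\mapsto g(\xi/\ze)$ near the contour $\Psi_t(C_\rho)$ by homotopy invariance of analytic continuation (using that $\Psi_u(C_\rho)$ sweeps out loops avoiding $A$ and whose images under $\ze\mapsto\ga(u)/\ze$ avoid $B'$), a point the paper leaves implicit but which is indeed needed for the closed-contour integral to make sense.
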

  
\begin{proof}
  Since each $\Psi_t$ is a homeomorphism of~$\C$,
  condition~\eqref{eqPsital} {\bbl entails $\Psi_t\ii(\al)=\al$ for
    $\al\in A$, whence}
  \beglab{condzeA}
  \ze\in\C\setminus A \Imp \Psi_t(\ze)\in\C\setminus A
  \quad \text{for all}\ens t\in I.
  \edla
  {\bbl We also have $\Psi_t\ii(0)=0$ and, by
    condition~\eqref{eqPsitgabe}, $\Psi_t\ii\big(\frac{\ga(t)}{\be}\big) = \frac{\ga(0)}{\be}$
    for all $\be\in B'$, whence}
  \beglab{condgazeB}
  \ze\neq0 \ens\text{and}\ens \frac{\ga(0)}{\ze} \in\C\setminus B'
  \Imp
  {\bbl \Psi_t(\ze)\neq0 \ens\text{and}\ens }
  \frac{\ga(t)}{\Psi_t(\ze)} \in\C\setminus B'
  \quad \text{for all}\ens t\in I
  \edla
  (indeed: 
  %
  $\frac{\ga(t)}{\Psi_t(\ze)} =\be\in B'$ would require $\ze =
  \Psi_t\ii\big(\frac{\ga(t)}{\be}\big) = \frac{\ga(0)}{\be}$).
  Therefore, since by~\eqref{ineqrho} all the points of~$C_\rho$ satisfy the requirements
  indicated in the \lhs s of~\eqref{condzeA}--\eqref{condgazeB}, the
  integral in the \rhs\ of~\eqref{contfodotg} is {\bbl a well-defined
    function $h_t(\xi)$ of~$\xi$, which}
  represents a holomorphic germ at~$\ga(t)$.
  
The Cauchy theorem entails that, for $t<t'$ close enough, the
functions~$h_t$ and~$h_{t'}$ coincide in a neighbourhood of $\ga_{\mid
[t,t']}$, hence $h_t$ is the analytic continuation along~$\ga$
at~$\ga(t)$ of~$h_0$, and $h_0 =f\odot g$ by~\eqref{eqfodotgaz}.
\end{proof}



\section{Construction of a non-autonomous vector field}   \label{secvf}


\parage
To conclude the proof of Theorem~\ref{thmABcontinuable}, it is thus
sufficient to find a deformation of the identity in the sense of
Definition~\ref{Defdeformidga}.  Our tool will be the flow map induced
by a non-autonomous vector field.
\begin{lem}
Suppose $X\col I\times\C \to \C$ is locally Lipschitz
(identifying~$\C$ and~$\R^2$) and satisfies
\beglab{ineqboundX}
\abs{X(t,\ze)} \le K \abs\ze
\edla
with some $K>0$. Then,
for each $t^* \in I$ and each initial condition $\ze_0\in\C$,
the non-autonomous vector field
\begla
\frac{\dd\ze}{\dd t} = X(t,\ze)
\edla
has a unique maximal solution $t \mapsto \Phi^{t^*,t}(\ze_0)$ such
that $\Phi^{t^*,t^*}(\ze_0)=\ze_0$.
This maximal solution is defined for all $t\in I$.

Moreover, the map $(t^*,t,\ze) \in I\times I\times \C \mapsto
\Phi^{t^*,t}(\ze)\in\C$ thus defined is locally Lipschitz, 
and $(\Phi^{t^*,t})_{t^*,t\in I}$ is a family of locally Lipschitz
homeomorphisms of~$\C$ satisfying
\begla
\Phi^{t,t^{**}} \circ \Phi^{t^*,t} = \Phi^{t^*,t^{**}},
\qquad
\Phi^{t^*,t^*} = \ID_\C.
\edla
\end{lem}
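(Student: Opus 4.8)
The plan is to deduce everything from the classical Cauchy–Lipschitz (Picard–Lindelöf) theorem applied to the non-autonomous ODE $\dd\ze/\dd t = X(t,\ze)$ on $I\times\C\simeq I\times\R^2$, the sublinear bound~\eqref{ineqboundX} being used, via Grönwall's lemma, to turn the local statements it provides into the global and uniform ones that are claimed. Since $X$ is locally Lipschitz, for every $t^*\in I$ and $\ze_0\in\C$ the equation has a unique maximal solution $t\mapsto\Phi^{t^*,t}(\ze_0)$ with $\Phi^{t^*,t^*}(\ze_0)=\ze_0$, defined on some subinterval of $I$ containing~$t^*$. Integrating the ODE and using \eqref{ineqboundX} gives $\abs{\Phi^{t^*,t}(\ze_0)}\le\abs{\ze_0}+K\babs*{\int_{t^*}^{t}\abs{\Phi^{t^*,s}(\ze_0)}\,\dd s}$, so Grönwall's lemma yields $\abs{\Phi^{t^*,t}(\ze_0)}\le\abs{\ze_0}\,\ee^{K\abs{t-t^*}}\le\abs{\ze_0}\,\ee^{K}$ on the whole interval of definition. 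Consequently, if that interval were proper, say equal to $[t^*,T)$ with $T\le1$, the solution would remain in the fixed compact disc $\ov\D_{\abs{\ze_0}\ee^K}$, on which $X$ is bounded; then $\ze'$ would be bounded and $\ze(t)$ would have a limit as $t\to T$, and restarting the ODE there would extend the solution, contradicting maximality. The same argument applies at the left endpoint, so the maximal solution is defined on all of~$I$, which is what makes $\Phi^{t^*,t}$ well defined for every $(t^*,t)\in I\times I$.

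For the algebraic and regularity properties, fix $t^*,t\in I$ and $\ze_0\in\C$: the two curves $s\mapsto\Phi^{t,s}\big(\Phi^{t^*,t}(\ze_0)\big)$ and $s\mapsto\Phi^{t^*,s}(\ze_0)$ both solve the ODE and agree at $s=t$, so by uniqueness $\Phi^{t,s}\circ\Phi^{t^*,t}=\Phi^{t^*,s}$; taking $s=t^*$ shows $\Phi^{t,t^*}$ is a two-sided inverse of $\Phi^{t^*,t}$, so each $\Phi^{t^*,t}$ is a bijection of~$\C$, and $\Phi^{t^*,t^*}=\ID_\C$ is the initial condition. For the regularity, fix $M>0$; by the a priori bound all solutions with $\abs{\ze_0}\le M$ stay in $\ov\D_{M\ee^K}$ for $t\in I$, and on the compact set $I\times\ov\D_{M\ee^K}$ the map $X$ is genuinely Lipschitz, say with constant~$L$. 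Grönwall applied to $t\mapsto\abs{\Phi^{t^*,t}(\ze_0)-\Phi^{t^*,t}(\ze_0')}$ then gives $\abs{\Phi^{t^*,t}(\ze_0)-\Phi^{t^*,t}(\ze_0')}\le\ee^{L}\abs{\ze_0-\ze_0'}$ for $\ze_0,\ze_0'\in\ov\D_M$; Lipschitz dependence in~$t$ follows from $\babs{\tfrac{\dd}{\dd t}\Phi^{t^*,t}(\ze_0)}=\abs{X(t,\Phi^{t^*,t}(\ze_0))}\le KM\ee^{K}$; and Lipschitz dependence in~$t^*$ follows by writing $\Phi^{t^*_1,t}=\Phi^{t^*_2,t}\circ\Phi^{t^*_1,t^*_2}$ and combining the two previous estimates, so that $\abs{\Phi^{t^*_1,t}(\ze_0)-\Phi^{t^*_2,t}(\ze_0)}\le\ee^{L}\,\abs{\Phi^{t^*_1,t^*_2}(\ze_0)-\ze_0}\le\ee^{L}KM\ee^{K}\abs{t^*_1-t^*_2}$. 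Putting the three estimates together yields joint local Lipschitz continuity of $(t^*,t,\ze)\mapsto\Phi^{t^*,t}(\ze)$; in particular each $\Phi^{t^*,t}$ and its inverse $\Phi^{t,t^*}$ are continuous, so $\Phi^{t^*,t}$ is a homeomorphism of~$\C$.

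I expect no serious obstacle here: the single point requiring care is that $X$ is only \emph{locally} Lipschitz, so Grönwall cannot be applied with one global constant. The hypothesis~\eqref{ineqboundX} is exactly what repairs this, since it confines every family of solutions issued from bounded initial data to a fixed compact set on which $X$ is truly Lipschitz; this one observation simultaneously delivers global existence on~$I$, uniqueness, and the uniform Lipschitz estimates used above.
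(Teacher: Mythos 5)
Your proof is correct and follows essentially the same route as the paper, which simply invokes the Cauchy--Lipschitz theorem and observes that the bound~\eqref{ineqboundX} prevents finite-time blow-up; you have merely spelled out the Gr\"onwall estimates and the flow-property argument that the paper leaves implicit.
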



\begin{proof}
Apply the classical Cauchy-Lipschitz theorem about the existence and
uniqueness of solutions to systems of ordinary differential equations
{\bbl and their regular dependence upon the initial condition}.
The maximal solutions are defined for all times because 
{\bbl of the bound~\eqref{ineqboundX}.}
\end{proof}


\begin{cor}   \label{corcXPsit}
  Suppose that we have found $c\col I\times\C \to \C$ locally
  Lipschitz, bounded by $K\abs\ze$ with some $K>0$, such that
    \begin{align}
    \label{eqcal}
    \al\in \{0\}\cup A' &\Imp \quad\ens
    c(t,\al) = 0 \qquad\text{for all}\ens t\in I,\\[1ex]
    \label{eqcgabe}
  \be\in B'\quad &\Imp c\Big(t,\frac{\ga(t)}{\be}\Big) = \frac{1}{\be}
    \quad\text{for all}\ens t\in I.
  \end{align}
  Then the flow map $\Psi_t \defeq \Phi^{0,t}$ of the non-autonomous
  vector field
  \begla
  X(t,z) \defeq c(t,z)\, \ga'(t)
  \edla
  gives rise to a deformation of the identity adapted to~$\ga$.
\end{cor}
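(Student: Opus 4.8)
The plan is to obtain the deformation $(\Psi_t)_{t\in I}$ as the flow of the non-autonomous vector field $X(t,z)=c(t,z)\,\ga'(t)$ by invoking the preceding lemma, and then to read off the two defining conditions \eqref{eqPsital}--\eqref{eqPsitgabe} of a deformation adapted to~$\ga$ directly from the hypotheses \eqref{eqcal}--\eqref{eqcgabe}, using uniqueness of solutions of the ODE.

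First I would check that $X$ meets the hypotheses of the preceding lemma. Since $c$ is locally Lipschitz on $I\times\C$ and $\ga'$ is Lipschitz, hence bounded, on the compact interval~$I$, the product $X(t,z)=c(t,z)\,\ga'(t)$ is locally Lipschitz on $I\times\C$; and setting $M\defeq\max_{t\in I}\abs{\ga'(t)}$, the bound $\abs{c(t,z)}\le K\abs{z}$ gives $\abs{X(t,z)}\le KM\abs{z}$, so \eqref{ineqboundX} holds with the constant~$KM$. The preceding lemma then supplies the flow $\Phi^{t^*,t}$, defined for all $t\in I$, with $(t^*,t,\ze)\mapsto\Phi^{t^*,t}(\ze)$ locally Lipschitz, each $\Phi^{t^*,t}$ a homeomorphism of~$\C$, and $\Phi^{t^*,t^*}=\ID_\C$. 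Putting $\Psi_t\defeq\Phi^{0,t}$ I obtain a family of locally Lipschitz homeomorphisms of~$\C$ with $\Psi_0=\Phi^{0,0}=\ID_\C$, which meets the first requirement in Definition~\ref{Defdeformidga}.

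Next I would verify the two invariance conditions. For \eqref{eqPsital}, fix $\al\in\{0\}\cup A'$; by \eqref{eqcal} the function $t\mapsto c(t,\al)$ vanishes identically, so $X(t,\al)=0$ for all~$t$ and the constant curve $t\mapsto\al$ is the solution of $\dd\ze/\dd t=X(t,\ze)$ through~$\al$ at time~$0$; by uniqueness $\Psi_t(\al)=\Phi^{0,t}(\al)=\al$ for every $t\in I$. For \eqref{eqPsitgabe}, fix $\be\in B'$ and consider $\ze_\be(t)\defeq\ga(t)/\be$: one has $\ze_\be'(t)=\ga'(t)/\be$, while \eqref{eqcgabe} gives $X\big(t,\ze_\be(t)\big)=c\big(t,\ga(t)/\be\big)\,\ga'(t)=\ga'(t)/\be$, so $\ze_\be$ is the solution through $\ga(0)/\be$ at time~$0$, and uniqueness forces $\Psi_t\big(\ga(0)/\be\big)=\ze_\be(t)=\ga(t)/\be$ for every $t\in I$. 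This is \eqref{eqPsitgabe}, and together with the previous paragraph it shows that $(\Psi_t)_{t\in I}$ is a deformation of the identity adapted to~$\ga$.

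I do not expect a real obstacle here: the entire analytic content is already packaged in the preceding lemma. The only two points deserving a moment's care are the verification of the linear-growth bound~\eqref{ineqboundX} for~$X$ (where one uses that $\ga'$ is bounded on the compact interval~$I$) and the observation that conditions~\eqref{eqcal}--\eqref{eqcgabe} have been tailored precisely so that $t\mapsto\al$ and $t\mapsto\ga(t)/\be$ are integral curves of~$X$; once this is noticed, the invariance of these points under~$\Psi_t$ is forced by uniqueness of solutions.
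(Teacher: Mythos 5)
Your argument is correct and follows exactly the same route as the paper's proof: read off from \eqref{eqcal}--\eqref{eqcgabe} that the constant curves $t\mapsto\al$ and the curves $t\mapsto\ga(t)/\be$ are integral curves of $X$, then invoke uniqueness of solutions. You are merely a bit more explicit than the published proof in checking the hypotheses of the preceding lemma (local Lipschitzness of $X$ and the linear-growth bound via $M=\max_I\abs{\ga'}$), which is a reasonable bit of added care but not a different approach.
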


\begin{proof}
  We have $\Psi_t(\al)=\Phi^{0,t}(\al)=\al$ for all $\al\in A$
  by~\eqref{eqcal},
  and~\eqref{eqcgabe} shows that $t\mapsto \frac{\ga(t)}{\be}$ is a
  particular solution of~$X$ for each $\be\in B'$, hence
  $\frac{\ga(t)}{\be} =
  \Phi^{0,t}\big(\frac{\ga(0)}{\be}\big) =
  \Psi_t\big(\frac{\ga(0)}{\be}\big)$.  
\end{proof}


\parage   \label{paragfindc}
Thus, we are just left with the question of finding a function~$c$
meeting the requirements of Corollary~\ref{corcXPsit}.
Since $\Om$ is closed and $\ga(I)$ is compact, we can pick $\de,M>0$ so
that, for each $t\in I$,
\beglab{ineqchoicedeM}
\de \le \abs{\ga(t)} \le M,
\qquad
\abs{\ga(t) - \al\be} \ge \de \quad\text{for all}\ens
(\al,\be) \in A\times B.
\edla


\begin{lem}
  If $\eps>0$ is chosen small enough so that
  \beglab{ineqchoiceeps}
  \eps< \frac{a}{2}, \qquad
  \frac{a\de}{\eps} > M + \frac{2M}{a} \eps,
  \edla
  then
  \beglab{ineqdistgabAp}
  \be\in B' \Imp \dist\Big(\frac{\ga(t)}{\be},A'\Big) \ge\eps
    \quad\text{for all}\ens t\in I.
  \edla
\end{lem}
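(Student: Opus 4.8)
The plan is to argue by contradiction, exploiting the two inequalities in~\eqref{ineqchoiceeps} to rule out, respectively, the ``small $\be$'' case and the ``large $\be$'' case. So suppose that for some $t\in I$, some $\be\in B'$ and some $\al\in A'$ one has $\bigl|\frac{\ga(t)}{\be}-\al\bigr|<\eps$; I want to derive a contradiction. Multiplying through by $\be$, this reads $|\ga(t)-\al\be|<\eps\abs\be$, which already looks promising since the left-hand side is bounded below by~$\de$ thanks to~\eqref{ineqchoicedeM}. Thus $\de < \eps\abs\be$, i.e. $\abs\be > \de/\eps$, which forces $\be$ to be large; this is the direction controlled by the second inequality in~\eqref{ineqchoiceeps}.

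The key geometric step is to bound $\abs\al$ from above in terms of $\abs\be$ and $M$. From $\bigl|\frac{\ga(t)}{\be}-\al\bigr|<\eps<\frac a2\le\frac{\abs\al}{2}$ (using $\abs\al\ge a$ since $\al\in A'$), the reverse triangle inequality gives $\bigl|\frac{\ga(t)}{\be}\bigr| > \abs\al - \eps > \frac{\abs\al}{2}$, hence $\abs\al < \frac{2}{\abs\be}\bigl|\ga(t)\bigr| \le \frac{2M}{\abs\be}$. Now I combine this with $|\ga(t)-\al\be|<\eps\abs\be$: since $\abs{\ga(t)}\le M$ and $|\al\be| = \abs\al\abs\be < 2M$, we get $|\ga(t)-\al\be| < M + 2M = 3M$... but this crude bound is not quite what I need. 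Instead I go back to $|\ga(t)-\al\be|<\eps\abs\be$ together with $\abs\al\abs\be < 2M$, i.e. $\abs\be < \frac{2M}{\abs\al}\le\frac{2M}{a}$; feeding this back into $\de < \eps\abs\be$ yields $\de < \eps\cdot\frac{2M}{a}$, i.e. $\frac{a\de}{\eps} < 2M$. On the other hand, since $\abs\al\ge a$ we have $|\al\be|\ge a\abs\be$, so $a\abs\be \le |\al\be| \le |\ga(t)| + |\ga(t)-\al\be| < M + \eps\abs\be$, hence $(a-\eps)\abs\be < M$ and, using $\eps<a/2$, $\tfrac a2\abs\be < M$, i.e. $\abs\be < \frac{2M}{a}$. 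Then $\de<\eps\abs\be<\frac{2M\eps}{a}$ again gives $\frac{a\de}{\eps}<2M \le M+\frac{2M}{a}\eps$, contradicting the second inequality of~\eqref{ineqchoiceeps}. Hence no such triple $(t,\be,\al)$ exists, which is exactly the claim~\eqref{ineqdistgabAp}.

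I expect the main obstacle to be merely bookkeeping: getting the chain of triangle-inequality estimates to land precisely on the quantity $M+\frac{2M}{a}\eps$ appearing in the hypothesis, rather than on some cruder bound, so that the two displayed conditions in~\eqref{ineqchoiceeps} are exactly what the argument consumes. The role of the first condition $\eps<a/2$ is to turn the factor $a-\eps$ into something comparable to $a$ (here $\tfrac a2$); the role of the second is to close the contradiction once $\abs\be$ has been squeezed between $\de/\eps$ and $\frac{2M}{a}$. One should double-check that $\eps$ can indeed be chosen to satisfy both: the first is an upper bound on $\eps$, and the second, rewritten as $a\de > \eps\bigl(M+\frac{2M}{a}\eps\bigr)$, is satisfied for all sufficiently small $\eps>0$ since the right-hand side tends to $0$; so the two constraints are compatible.
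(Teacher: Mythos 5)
Your overall strategy is the same as the paper's (argue by contradiction, use the first condition $\eps<\frac a2$ to force $\abs\be<\frac{2M}{a}$, then use the second condition to close the contradiction), but the final step of your chain is wrong. You derive $\frac{a\de}{\eps}<2M$ and then assert $2M\le M+\frac{2M}{a}\eps$; however this last inequality is equivalent to $\eps\ge\frac a2$, which is precisely what the first condition $\eps<\frac a2$ forbids. So $2M> M+\frac{2M}{a}\eps$, and the weak bound $\frac{a\de}{\eps}<2M$ does not contradict the hypothesis $\frac{a\de}{\eps}>M+\frac{2M}{a}\eps$: both can hold simultaneously.

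The repair is to avoid the lossy intermediate bound $a\abs\be<2M$. You already have all the ingredients: $\de<\eps\abs\be$ gives $\frac{a\de}{\eps}<a\abs\be$; the triangle inequality gives $a\abs\be\le M+\eps\abs\be$; and $\abs\be<\frac{2M}{a}$ (which you correctly extracted from $(a-\eps)\abs\be<M$ and $\eps<\frac a2$) gives $M+\eps\abs\be<M+\frac{2M}{a}\eps$. Chaining these yields $\frac{a\de}{\eps}<M+\frac{2M}{a}\eps$, which is the desired contradiction with~\eqref{ineqchoiceeps}. This is exactly what the paper does, and once you make that substitution the rest of your argument is fine. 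A minor remark: your ``reverse triangle inequality'' paragraph deriving $\abs\al\abs\be<2M$ is superfluous, since you rederive $\abs\be<\frac{2M}{a}$ more directly afterwards; also, the paper reaches $\abs\be<\frac{2M}{a}$ via an explicit two-case split, whereas your one-line argument $(a-\eps)\abs\be<M\Rightarrow\frac a2\abs\be<M$ is a slight streamlining of the same idea.
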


\begin{proof}
Suppose $\abs{\frac{\ga(t)}{\be}-\al} <\eps$ with $\al\in A'$. We
would have
\beglab{ineqadeeps}
\de \le \abs{\ga(t) - \al\be} < \eps \abs\be
\edla
by~(\ref{ineqchoicedeM}b).
We also have $\abs{\al\be}\ge a\abs\be$ by~\eqref{ineqab} and
$\abs{\ga(t)}\le M$ by~(\ref{ineqchoicedeM}a),
whence 
\beglab{ineqtr}
a\abs\be \le \eps\abs\be + M
\edla
by the triangle inquality.
Now, either $\abs\be\ge \frac{2M}{a}$, but then 
$a\abs\be \le \eps\abs\be + M \le (\eps+\frac{a}{2})\abs\be$
leads to a contradiction with~(\ref{ineqchoiceeps}a);
or $\abs\be < \frac{2M}{a}$, but then 
$\frac{a\de}{\eps} < a\abs\be$ by~\eqref{ineqadeeps},
which is $\le M + \abs\be\eps$ by~\eqref{ineqtr}, 
whence the inequality
$\frac{a\de}{\eps} < M + \frac{2M}{a}\eps$,
which contradicts~(\ref{ineqchoiceeps}b).
\end{proof}


Let us choose a Lipschitz function $\eta=\eta_{\eps,A'} \col \C \to
[0,1]$ such that
\beglab{eqchoiceeta}
\al\in A' \imp \eta(\al) = 0,
\qquad
\dist(\ze,A') \ge \eps \imp \eta(\ze) = 1.
\edla
For instance, we may pick a Lipschitz function $\chi_\eps \col
\R_{\ge0} \to [0,1]$ such that $\chi_\eps(0)=0$ and
\[
d\ge\eps \imp \chi_\eps(d)=1,
\]
\eg $\chi_\eps(d) \defeq {\bbl\min}\big\{\frac{d}{{\bbl\eps}},1\big\}$, and define
$\eta(\ze) \defeq \chi_\eps\big(\dist(\ze,A')\big)$.
\medskip

We conclude by checking that the function defined by
\begla
%
c(t,\ze) \defeq \eta(\ze)
\frac{\ze}{\ga(t)}
%
\edla
fulfils all the requirements of Corollary~\ref{corcXPsit}:
\begin{enumerate}[--]
\item The function~$c$ is locally Lipschitz and bounded by
  ${\bbl\frac{1}{\de}}\abs\ze$, by virtue of~(\ref{ineqchoicedeM}a), because
  $\abs\eta\le1$.
\item We have $c(t,0)=0$.
\item If $\al\in A'$, then $c(t,\al)=0$ because~(\ref{eqchoiceeta}a)
  says that $\eta(\al)=0$.
\item If $\be\in B'$, then~\eqref{ineqdistgabAp} says that
  $\dist\big(\frac{\ga(t)}{\be},A'\big) \ge\eps$, hence
  $c\big(t,\frac{\ga(t)}{\be}\big) =
  \eta\big(\frac{\ga(t)}{\be}\big)\frac{1}{\be} = \frac{1}{\be}$
  by~(\ref{eqchoiceeta}b).
\end{enumerate}
Therefore, the proof of Theorem~\ref{thmABcontinuable} is complete.

\medskip


\parage
\textbf{Remark.}\ 
When~$B$ is a finite set, the construction of \S\,\ref{paragfindc}.2
can be replaced by one closer to that of \cite{Sau13}, namely:
\begla
c(t,\ze) \defeq
\sum_{\be_1\in B'}\,\, \frac{ \eta_{\be_1}(\ze,t) }{ \eta_{\be_1}(\ze,t) +
  \abs{\ze-\frac{\ga(t)}{\be_1}} }
\cdot \frac{1}{\be_1},
\qquad\text{with}\quad
\eta_{\be_1}(\ze,t) \defeq \dist \big(\ze, \{0\}\cup A' \cup
B(\be_1,t) \big),
\edla
where
$B(\be_1,t) \defeq \big\{ \frac{\ga(t)}{\be_2} \mid \be_2 \in
B'\setminus\{\be_1\} \big\}$.
Indeed, the denominators are all always $>0$ precisely because
$\ga(I)$ and $\{0\}\cup (A'\cdot B')$ do not intersect.


\vspace{1cm}

\subsubsection*{Acknowledgements}

The second author thanks Capital Normal University for their hospitality
during the period September 2019--February 2020, where this research
was started.
The third author is partially supported by NSFC (No.s 11771303, 11911530092, 11871045).
\medskip

This paper is partly a result of the ERC-SyG project, Recursive and
Exact New Quantum Theory (ReNewQuantum) which received funding from
the European Research Council (ERC) under the European Union's Horizon
2020 research and innovation programme under grant agreement No
810573.


\vspace{.4cm}

\end{document}